\newtheorem{thm}{Theorem}[section] 
\newtheorem{lem}[thm]{Lemma}     
\newtheorem{cor}[thm]{Corollary}
\newtheorem{prop}[thm]{Proposition}
\newtheorem{conj}[thm]{Conjecture}
\theoremstyle{definition}
\numberwithin{equation}{section}
\def\Z{\mathbb{Z}}
\def\F{\mathbb{F}}
\def\Sym{\mathrm{Sym}}
\def\wt{\widetilde}
\begin{document}

\title[Some new results about a conjecture by Brian Alspach]{Some new results about \\ a conjecture by Brian Alspach}

\author{S. Costa}
\address{DICATAM - Sez. Matematica, Universit\`a degli Studi di Brescia, Via
Branze 43, 25123 Brescia, Italy}
\email{simone.costa@unibs.it}

\author{M.A. Pellegrini}
\address{Dipartimento di Matematica e Fisica, Universit\`a Cattolica del Sacro Cuore, Via Musei 41, 25121 Brescia, Italy}
\email{marcoantonio.pellegrini@unicatt.it}

\begin{abstract}
In this paper we consider the following conjecture, proposed by Brian Alspach, concerning partial sums in finite 
cyclic groups: given a subset $A$ of $\Z_n\setminus \{0\}$ of size $k$ such that $\sum_{z\in A} z\not= 0$, it is 
possible to 
find an ordering $(a_1,\ldots,a_k)$ of the elements of $A$ such that the partial sums $s_i=\sum_{j=1}^i a_j$,
$i=1,\ldots,k$, are nonzero 
and pairwise distinct.
This conjecture is known to be true for subsets of size $k\leq 11$ in cyclic groups of prime order.
Here, we extend such result to any torsion-free abelian group and, as a consequence, we provide an asymptotic result in 
$\Z_n$. 

We also consider a related conjecture, originally proposed by Ronald Graham:
given a subset  $A$ of $\mathbb{Z}_p\setminus\{0\}$, where $p$ is a prime, there exists an ordering of the elements 
of $A$ such that the partial sums are all distinct.
Working with the methods developed by Hicks, Ollis and Schmitt, based on the  Alon's combinatorial Nullstellensatz,
we prove the validity of such conjecture for subsets $A$ of size $12$.
\end{abstract}

\keywords{Alspach's conjecture,  partial sum, torsion-free abelian group, polynomial method}
\subjclass[2010]{05C25, 20K15}

\maketitle

\section{Introduction}

In this paper we give some new results about a conjecture, due to Brian Alspach, concerning finite cyclic groups. 
First of all, we introduce some notations.
Given an abelian group $(G,+)$, a finite subset $A=\{x_1,x_2,\dots, x_k\}$ of $G\setminus\{0_G\}$ 
and an ordering $\omega=(x_{j_1},x_{j_2},\ldots,x_{j_k})$ of its elements, we denote by $s_i=s_i(\omega)$ 
the partial sum $x_{j_1}+x_{j_2}+\dots+x_{j_i}$.
Clearly, the ordering $\omega$  induces the permutation $\sigma_\omega=(j_1,j_2,\ldots,j_k)\in \Sym(k)$.
Alspach's conjecture was originally proposed only for finite cyclic groups, see \cite{ADMS,BH}. 
However, it can be extended to any abelian group, \cite{CMPP, O1}.

\begin{conj}[Alspach]\label{Conj:als}
Given an abelian group $(G,+)$ and a subset $A$ of $G\setminus \{0_G\}$ of size $k$ such that $\sum_{z\in A} z\not= 
0_G$, 
it is possible to find an ordering $\omega$ of the elements of $A$ such that $s_i(\omega)\not=0_G$ and 
$s_i(\omega)\neq s_j(\omega)$ for all $1 \leq i < j \leq k$. 
\end{conj}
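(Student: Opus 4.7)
The plan is to attack Conjecture~\ref{Conj:als} through a three-stage programme: a structural reduction, a polynomial-method attack in the prime cyclic case, and a lifting step to combine everything over a general finitely generated abelian group. Since the subset $A$ is finite, one may first replace $G$ with the subgroup $H = \langle A \rangle$ it generates, without loss of generality: all partial sums of any ordering of $A$ remain in $H$. The structure theorem for finitely generated abelian groups then writes $H \cong \Z^{r} \oplus \bigoplus_{i=1}^{s} \Z_{p_{i}^{e_{i}}}$, so the task splits into a torsion-free piece and $p$-primary pieces.

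For the torsion-free piece, I would try to extend the paper's torsion-free theorem from $k \le 11$ to arbitrary $k$, exploiting that distinctness and non-vanishing of partial sums in a direct sum are implied by the same properties in any single coordinate; this effectively allows one to work in $\Z$ and to use order-theoretic arguments such as sorting $A$ by absolute value and applying a greedy-with-repair strategy. For the $p$-primary piece, I would first treat $H = \Z_{p}$ via the polynomial-method framework of Hicks--Ollis--Schmitt used in the present paper: introduce $k$ variables ranging over $A$ (or $A \cup \{0\}$), build a polynomial whose non-vanishing forces the induced ordering to have distinct and nonzero partial sums, and invoke Alon's combinatorial Nullstellensatz to reduce the problem to the non-vanishing in $\F_{p}$ of a particular top-degree monomial coefficient. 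With the prime cyclic case in hand, I would lift to $\Z_{p^{e}}$ by a Hensel-type induction on $e$, perturbing a valid ordering modulo $p^{e}$ so that it remains valid modulo $p^{e+1}$, and finally recombine the primary components and the torsion-free piece via the Chinese Remainder Theorem and a coordinate-wise argument.

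The principal obstacle, and the reason Conjecture~\ref{Conj:als} has resisted proof for decades, is the polynomial-method step: the target coefficient has no known closed form, and direct expansion is exactly what has confined the existing cyclic-prime results to small $k$. A uniform attack would almost certainly require a new combinatorial identity --- perhaps a bijective interpretation in terms of lattice paths or standard Young tableaux, or a symmetric-function reformulation that makes the coefficient manifestly positive --- rather than case-by-case computation. The lifting step is also delicate, since a local perturbation that repairs one partial sum can damage another; to handle this I would try to count valid orderings modulo $p^{e}$ with enough slack that a union-bound or averaging argument forces survival modulo $p^{e+1}$.
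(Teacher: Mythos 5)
You should first note that the statement you were asked to prove is a \emph{conjecture}: the paper does not prove it, and it remains open. What the paper actually establishes are special cases (validity for $k\leq 11$ in torsion-free abelian groups, Theorem \ref{torsionfree} and Corollary \ref{torsionfree10}, plus the asymptotic statement of Theorem \ref{thm:as}). Your text is accordingly a research programme rather than a proof, and each of its three stages has a genuine gap. The initial reduction to $H=\langle A\rangle$ and the structure theorem is fine (the paper uses it in the proof of Theorem \ref{torsionfree}), but the claim that one can then ``work in any single coordinate'' fails: while distinctness and nonvanishing of the partial sums in one coordinate do imply them in the direct sum (this is exactly the mechanism of Lemma \ref{cambiogruppo}), the projection of a nice set onto a summand need not be nice --- elements of $\Upsilon(A)$ can land in the kernel of the projection, so elements of $A$ may collapse or become zero. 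The paper escapes this only in the torsion-free case, by building a homomorphism to $\Z$ (the base-$M$ trick of Proposition \ref{prop:Zn}) and then to $\Z_p$ for a large prime, chosen so that the kernel misses $\Upsilon(A)$; no analogous choice exists for the $p$-primary components, which is precisely why the torsion case is hard.

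The remaining two stages are not proofs but restatements of the open problems. For the polynomial method you concede that the relevant coefficient of $F_k$ has no known closed form and no known nonvanishing proof for general $k$; that concession is the entire content of the open problem in the prime cyclic case, so nothing is proved. The ``Hensel-type'' lift from $\Z_p$ to $\Z_{p^e}$ has no mechanism behind it: the object being lifted is a permutation of a fixed finite set, not a root of a polynomial, so there is no derivative or nonsingularity condition to invoke, and Alon's Nullstellensatz (Theorem \ref{thm:alon}) requires a field, so the grid argument does not transfer to $\Z_{p^e}$. Finally, the CRT recombination step would require a \emph{single} permutation of $A$ that is simultaneously valid in every primary component; producing a valid ordering in each component separately yields different permutations with no way to merge them, and this is again blocked by the fact that the component projections of $A$ need not be nice. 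In short, beyond the first reduction your proposal coincides with the known partial results where it works and with the open problem where it does not.
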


The validity of such conjecture has been proved in each of the following cases:
\begin{itemize}
\item[(1)] $k\leq 9$ or $k=|G|-1$, \cite{6,ADMS,BH,Gordon};
\item[(2)] $k=10$ or $|G|-3$ with $G$ cyclic of prime order, \cite{JOS};
\item[(3)] $k=11$  with $G$ cyclic of prime order, \cite{Sarah};
\item[(4)] $|G| \leq 21$, \cite{BH,CMPP};
\item[(5)] $G$ is cyclic and either $k=|G|-2$  or $|G|\leq 25$,  \cite{ADMS,BH}.
\end{itemize}
Clearly, when $k=|G|-3,|G|-2,|G|-1$, $G$ is assumed to be finite.
Alspach's conjecture is worth to be studied also in connection with sequenceability and strong sequenceability of 
groups, see \cite{Al,6,O}, and simplicity of Heffter arrays, see \cite{A, ADDY, CMPPHeffter, DW}.

In Section \ref{torsion} we explain how the validity of Conjecture \ref{Conj:als} for sets of size $k$ in cyclic 
groups
$\Z_p$, for infinitely many primes $p$, implies the validity for sets of size $k$ in any torsion-free abelian group. 
As a consequence, in Section \ref{asy}, we provide an asymptotic result for sets of size $k\leq 11$ 
in finite cyclic groups: this has been achieved without any direct or recursive construction 
(that we believe can hardly be obtained) but only with some theoretical non-constructive  arguments. 

Another conjecture, very close to the Alspach's one, was originally proposed by R.L. Graham in \cite{Gr} for cyclic 
groups of prime order, and by  D.S. Archdeacon, J.H. Dinitz, A. Mattern and D.R. Stinson for any finite cyclic group, see 
\cite{ADMS}.

\begin{conj}[G-ADMS]\label{Conj:ADMS}
Let $A\subseteq \mathbb{Z}_n\setminus\{0\}$. 
Then there exists an ordering of the elements of $A$ such that the partial sums are all
distinct.
\end{conj}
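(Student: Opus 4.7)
The plan is to attack the next open case $|A|=12$ via the polynomial method of Hicks, Ollis and Schmitt, working in $\Z_p$ of prime order where Alon's combinatorial Nullstellensatz is directly applicable (this is Graham's original setting); the extension to composite $n$ would be handled by separate, more delicate, arguments.

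The first step is to encode the problem polynomially. Label the twelve positions by variables $x_1,\dots,x_{12}$. The requirement that all partial sums $s_i(\omega)$ be distinct is equivalent to the non-vanishing of every ``interval sum''
\[
L_{l,m}(x) = x_{l+1} + x_{l+2} + \dots + x_m, \qquad 1 \le l < m \le 12,
\]
so one is naturally led to the polynomial
\[
F(x_1,\dots,x_{12}) = \prod_{1 \le l < m \le 12} L_{l,m}(x),
\]
of degree $\binom{13}{3} = 286$, with the property that $F(a_{\pi(1)},\dots,a_{\pi(12)}) \ne 0$ for some permutation $\pi$ of $A$ immediately delivers the ordering demanded by the conjecture.

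To force the values to form a permutation of $A$, I would adopt the now-standard reformulation behind the Hicks--Ollis--Schmitt approach: work modulo the ideal generated by $\prod_{a \in A}(x_i - a)$ for each $i$ (equivalently, augment $F$ by a suitable Lagrange-type factor) and seek a distinguished monomial $x_1^{d_1}\cdots x_{12}^{d_{12}}$, with all $d_i \le 11$, whose coefficient in the expansion is nonzero modulo $p$. By the combinatorial Nullstellensatz applied with each coordinate set equal to $A$, such a coefficient certifies the existence of distinct $a'_i \in A$ on which $F$ does not vanish, producing the required ordering.

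The principal obstacle is precisely to exhibit such a monomial and prove its coefficient is nonzero for $k=12$. In the cases $k \le 11$ settled by previous authors, this was achieved by somewhat \emph{ad hoc} computations whose complexity scales quickly with $k$; for $k=12$ the degree of the relevant polynomial and the number of candidate monomials grow substantially. I would first attempt to derive a uniform expression (for example a determinantal or hypergeometric form) by interpolating from the solutions at smaller $k$, and then evaluate or bound it modulo arbitrary primes $p$. If a clean closed form is not forthcoming, the final step would be a computer-algebra verification specific to $k=12$, together with a separate argument treating the finitely many small primes for which the target coefficient might degenerate modulo $p$.
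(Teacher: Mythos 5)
The statement you are addressing is a conjecture; the paper does not prove it in general, but only establishes it for subsets of size $k\leq 12$ of cyclic groups of prime order (Proposition \ref{ADMS12}), and your proposal likewise targets only $k=12$ in $\Z_p$. Your overall strategy --- the combinatorial Nullstellensatz applied to a polynomial encoding the distinctness of partial sums, following Hicks, Ollis and Schmitt --- is indeed the paper's. However, your setup has two concrete defects. First, the polynomial $F=\prod_{l<m}L_{l,m}$ only encodes the non-vanishing of the interval sums; it does not force the twelve chosen values to be \emph{distinct} elements of $A$, which is what makes them an ordering. Reducing modulo the ideals generated by $\prod_{a\in A}(x_i-a)$ does not repair this: that reduction merely restricts evaluation to the grid $A^{12}$, which Theorem \ref{thm:alon} already does, and it imposes no injectivity among the coordinates. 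The standard and necessary fix is to multiply by the difference factors $\prod(x_m-x_l)$; this is exactly why the paper works with $f_{12}(x_1,\ldots,x_{12})=g_{11}(x_2,\ldots,x_{12})\cdot\prod_{j=2}^{12}(x_j-x_1)$, a homogeneous polynomial of degree $121=11^2$ whose target monomials $x_1^{11}\cdots x_{j-1}^{11}\cdot x_{j+1}^{11}\cdots x_{12}^{11}$ have all exponents equal to $11<|A|$. Second, your degree bookkeeping is wrong: $\prod_{1\leq l<m\leq 12}L_{l,m}$ has degree $\binom{12}{2}=66$, not $\binom{13}{3}=286$; and a polynomial of degree $286$ in twelve variables cannot have any monomial with all exponents at most $11$, since $12\cdot 11=132<286$, so Theorem \ref{thm:alon} could never be applied to it as you describe.

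More importantly, the entire mathematical content of the paper's argument lies precisely in the step you defer as ``the principal obstacle'': exhibiting a monomial whose coefficient is nonzero modulo every relevant prime. The paper does this by reducing the computation of the coefficients $d_{12,j}$ of $f_{12}$ to the coefficients $e_{11,j}$ of $g_{11}$, and further to sums of coefficients $a^{(11)}_{i,j}$ of $F_{11}=g_{11}/(x_1+\cdots+x_{11})$, a polynomial of degree $109$ in only eleven indeterminates; an explicit Magma computation then yields $e_{11,1}=18128730243333160$ and $e_{11,2}=46383022877233608$, and since $\gcd(e_{11,1},e_{11,2})=2^3$, for every odd prime $p$ at least one target coefficient is nonzero modulo $p$. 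Without this computation (or the closed form you only speculate might exist), your proposal is a programme rather than a proof, and even as a programme it would need the corrections above before the Nullstellensatz could be invoked.
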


In \cite{ADMS} the authors proved that Conjecture \ref{Conj:als} in a group $\Z_n$ for sets of size at most $k$ 
implies Conjecture \ref{Conj:ADMS} in the same group $\Z_n$ for sets of size at most $k$.
As remarked in \cite{CMPP}, G-ADMS conjecture can be extended to any finite subset of an abelian group.
This implies that our results on Alspach's conjecture can be applied also to G-ADMS conjecture.
In Section \ref{altre}, we prove the validity of Conjecture \ref{Conj:ADMS} for subsets of size $12$
of cyclic groups of prime order. This result is achieved using 
Alon's combinatorial Nullstellensatz and the techniques developed in \cite{JOS}.
As a consequence we obtain a similar extension to torsion-free abelian groups 
and a similar asymptotic result.

\section*{Acknowledgements}

The computations of Section \ref{altre} have been performed with Magma \cite{mg} 
on a computer with 32 GB of RAM, kindly provided by Francesco Strazzanti:
the authors want to sincerely thank Francesco for his help.

The first author was partially supported by the National Group for Algebraic and Geometric Structures, and their 
Applications (GNSAGA--INdAM).

\section{Alspach's conjecture for torsion-free abelian groups}\label{torsion}

In this paper, we will say that a finite subset $A$ of an abelian group $(G,+)$ 
is nice if $0_G \not \in A$ and  $\sum_{z\in A} z\not= 0_G$. Also, with an abuse of notation, 
we will say that Alspach's conjecture is true in $G$ for any subset of 
size $k$ if it is true for any nice subset of size $k$.

Given a nice subset $A$ of an abelian group $G$, 
by $\Delta(A)$ we mean the set $\{x_1-x_2 :   x_1,x_2\in A, x_1\neq x_2\}$.
This allows us to define the set 
$$\Upsilon(A)=A\cup\Delta(A)\cup\left\{\sum_{z\in A} z\right\}.$$
Given two integers $a,b$ with $a\leq b$, the subset $\{a,a+1,\ldots,b\}\subset \Z$ will be denoted by $[a,b]$.
Our aim is to extend the known results on Alspach's conjecture in cyclic groups of prime order  to any torsion-free 
abelian group.

\begin{lem}\label{cambiogruppo}
Let $G_1$ and $G_2$ be abelian groups such that Alspach's conjecture holds in $G_2$ for any subset of size $k$. 
Given a nice subset $A$ of $G_1$ of size $k$, suppose there exists an homomorphism 
$\varphi: G_1\to G_2$ such that $\ker(\varphi)\cap \Upsilon(A)=\emptyset$. 
Then, Alspach's conjecture is true for the subset $A$.
\end{lem}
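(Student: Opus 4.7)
The plan is to transport the problem from $G_1$ to $G_2$ via $\varphi$, solve it there by hypothesis, and then lift the resulting ordering back to $A$. The conditions encoded in the definition of $\Upsilon(A)$ are exactly what guarantees that this transfer preserves all the structure we need.

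First I would consider the image $\varphi(A) \subseteq G_2$ and verify it is a nice subset of $G_2$ of size $k$. Since $A \subseteq \Upsilon(A)$, the assumption $\ker(\varphi) \cap \Upsilon(A) = \emptyset$ ensures $0_{G_2} \notin \varphi(A)$. Since $\Delta(A) \subseteq \Upsilon(A)$, the same assumption gives that $\varphi(x_1) \neq \varphi(x_2)$ whenever $x_1, x_2 \in A$ are distinct, so $\varphi|_A$ is injective and $|\varphi(A)| = k$. Finally, since $\sum_{z\in A} z \in \Upsilon(A)$, we get $\sum_{y \in \varphi(A)} y = \varphi\bigl(\sum_{z \in A} z\bigr) \neq 0_{G_2}$, so $\varphi(A)$ has nonzero total sum.

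Next, since Alspach's conjecture holds in $G_2$ for subsets of size $k$, there exists an ordering of $\varphi(A)$ whose partial sums are nonzero and pairwise distinct in $G_2$. Because $\varphi|_A$ is a bijection onto $\varphi(A)$, this ordering pulls back uniquely to an ordering $\omega = (x_{j_1}, \ldots, x_{j_k})$ of $A$. Writing $s_i = \sum_{\ell=1}^{i} x_{j_\ell}$, the homomorphism property gives $\varphi(s_i) = \sum_{\ell=1}^{i} \varphi(x_{j_\ell})$, which is precisely the $i$-th partial sum of the chosen ordering of $\varphi(A)$. Hence $\varphi(s_i) \neq 0_{G_2}$ and $\varphi(s_i) \neq \varphi(s_j)$ for $i \neq j$. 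These relations immediately force $s_i \neq 0_{G_1}$ and $s_i \neq s_j$, giving an Alspach ordering for $A$.

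There is really no hard step here; the content of the lemma is the correct choice of $\Upsilon(A)$, which packages precisely the three obstructions (zero elements, collisions among elements, zero total sum) that could ruin niceness of the image. Once one sees that $\Upsilon(A)$ was designed to avoid exactly those failures, the proof reduces to a one-line verification followed by a pullback of the ordering via the homomorphism.
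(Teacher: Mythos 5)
Your proof is correct and follows exactly the same route as the paper's: use the disjointness of $\ker(\varphi)$ and $\Upsilon(A)$ to show $\varphi(A)$ is a nice $k$-subset of $G_2$, apply the conjecture there, and pull the ordering back through $\varphi$, noting that distinctness and nonvanishing of the image partial sums force the same for the partial sums in $G_1$. The only difference is that you spell out in detail why each of the three components of $\Upsilon(A)$ is needed, which the paper leaves implicit.
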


\begin{proof}
Since no element of $\Upsilon(A)$ belongs to the kernel of $\varphi$, $\varphi(A)$ is a nice subset 
of $G_2$ of size $k$. Therefore, there exists an ordering 
$\omega_2=(x_1,x_2,\ldots,x_k)$ of the elements of $\varphi(A)$ such that $s_i(\omega_2)\not=0_{G_2}$ and 
$s_i(\omega_2)\neq s_j(\omega_2)$ for all $1 \leq i < j \leq k$. 
However, considering the ordering $\omega_1=(z_1,z_2,\ldots,z_k)$ of the elements of $A$, where $\varphi(z_i)=x_i$ for all 
$i=1,\ldots,k$, we obtain that the partial sums $s_i(\omega_1)$ in $G_1$ are still pairwise distinct and 
nonzero.  
\end{proof}

\begin{prop}\label{prop:Z}
Let $k$ be a positive integer and suppose that, for infinitely many primes $p$,  Alspach's conjecture holds in $\Z_p$ for 
any subset of size $k$. Then Alspach's conjecture holds in $\Z$ for any subset of size $k$.
\end{prop}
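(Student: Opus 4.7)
The plan is to apply Lemma \ref{cambiogruppo} with $G_1=\Z$ and $G_2=\Z_p$ for a suitably chosen prime $p$, where $\varphi\colon \Z\to\Z_p$ is the canonical quotient map. Since we are assuming Alspach's conjecture in $\Z_p$ for infinitely many primes $p$, it suffices to show that, given a nice subset $A\subset\Z$ of size $k$, all but finitely many primes $p$ satisfy $\ker(\varphi)\cap\Upsilon(A)=\emptyset$; then any prime from the (infinite) ``good'' list that avoids this finite exceptional set will do.

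The first step is the observation that $\Upsilon(A)$ consists entirely of \emph{nonzero} integers. Indeed, $0\notin A$ by niceness, each element of $\Delta(A)$ is of the form $x_1-x_2$ with $x_1\neq x_2$ and hence nonzero, and $\sum_{z\in A}z\neq 0$ again by niceness. Since $A$ is finite, $\Upsilon(A)$ is a finite subset of $\Z\setminus\{0\}$.

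The second step is then immediate: let $P(A)$ denote the (finite) set of primes dividing at least one element of $\Upsilon(A)$. The kernel of the quotient map $\varphi\colon\Z\to\Z_p$ is $p\Z$, so $\ker(\varphi)\cap\Upsilon(A)=\emptyset$ exactly when $p\notin P(A)$. By hypothesis, we may pick a prime $p\notin P(A)$ for which Alspach's conjecture holds in $\Z_p$ for all subsets of size $k$. Lemma \ref{cambiogruppo} then yields an ordering of $A$ with distinct nonzero partial sums in $\Z$.

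There is no real obstacle here: the whole argument hinges on the elementary finiteness statement that only finitely many primes can divide one of the finitely many nonzero integers of $\Upsilon(A)$. The one point worth stating carefully is precisely the verification that every element of $\Upsilon(A)$ is nonzero in $\Z$, since this is what guarantees that avoiding the finite set $P(A)$ is possible at all.
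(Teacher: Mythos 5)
Your proof is correct and follows essentially the same route as the paper: both reduce to Lemma \ref{cambiogruppo} by choosing, from the infinite list of good primes, one whose kernel $p\Z$ misses the finite set $\Upsilon(A)$ of nonzero integers. The only cosmetic difference is that the paper takes $p>\max_{z\in\Upsilon(A)}|z|$ while you take $p$ outside the finite set of prime divisors of elements of $\Upsilon(A)$; both rest on the same observation that every element of $\Upsilon(A)$ is nonzero.
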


\begin{proof}
Consider a nice subset $A$ of $\Z$ of size $k$. Let $p>\max\limits_{z\in \Upsilon(A)} |z|$  be a prime such that 
Alspach's conjecture holds in $\Z_p$ for subsets of size $k$. 
Then, $\Upsilon(A)$ and  the kernel of the canonical projection $\pi_p: \Z\to  \Z_p$ are disjoint sets.
Hence, the statement follows from Lemma \ref{cambiogruppo}.
\end{proof}

We now consider the free abelian group $\Z^n$ of rank $n$.

\begin{prop}\label{prop:Zn}
Suppose that Alspach's conjecture holds in $\Z$ for any subset of size $k$. 
Then it holds in $\Z^n$ for any $n\geq 2$ and any subset of size $k$.
\end{prop}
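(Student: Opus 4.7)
The plan is to reduce to the already-known case $G_2 = \Z$ via Lemma \ref{cambiogruppo}. Fix $n \geq 2$ and a nice subset $A$ of $\Z^n$ of size $k$; it suffices to produce a group homomorphism $\varphi : \Z^n \to \Z$ whose kernel is disjoint from the finite set $\Upsilon(A)$. First I would observe that every $z \in \Upsilon(A) = A \cup \Delta(A) \cup \{\sum_{z' \in A} z'\}$ is a nonzero vector of $\Z^n$: the elements of $A$ and the total sum are nonzero because $A$ is nice, and each element of $\Delta(A)$ is nonzero because it is the difference of two distinct vectors.

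Next, since $\Z^n$ is free abelian of rank $n$, every homomorphism $\Z^n \to \Z$ has the form $\varphi_v(x_1, \ldots, x_n) = v_1 x_1 + \cdots + v_n x_n$ for a uniquely determined $v = (v_1, \ldots, v_n) \in \Z^n$. My choice will be $v = (1, N, N^2, \ldots, N^{n-1})$ for a sufficiently large positive integer $N$. For any fixed nonzero $z = (z_1, \ldots, z_n) \in \Upsilon(A)$, the quantity $\varphi_v(z) = z_1 + z_2 N + \cdots + z_n N^{n-1}$ is a nonzero integer polynomial in the variable $N$ of degree at most $n-1$, and therefore vanishes for at most $n-1$ values of $N$. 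Since $\Upsilon(A)$ is finite, only finitely many values of $N$ are excluded in total; any $N$ avoiding all of them produces a $\varphi_v$ with $\ker(\varphi_v) \cap \Upsilon(A) = \emptyset$, and Lemma \ref{cambiogruppo} then concludes the proof.

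There is no substantive obstacle here: the content is the classical observation that a finite union of proper rank-$(n-1)$ sublattices cannot cover $\Z^n$, made effective by the \emph{distinct powers of $N$} trick. The proof is essentially reduction plus one explicit construction of a separating linear form.
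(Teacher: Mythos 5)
Your proposal is correct and follows essentially the same route as the paper: reduce to $\Z$ via Lemma \ref{cambiogruppo} using the linear form $(x_1,\ldots,x_n)\mapsto \sum_i x_i N^{i-1}$. The only (immaterial) difference is how the disjointness of $\ker(\varphi)$ from $\Upsilon(A)$ is verified --- you count roots of the nonzero polynomial $\sum_i z_i N^{i-1}$ and pick $N$ outside a finite bad set, whereas the paper fixes an explicit $N$ larger than all coordinate absolute values and invokes uniqueness of base-$N$ expansions; both verifications are valid.
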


\begin{proof}
Fix a nice subset $A=\{a^1,a^2,\dots,a^k\}$ of $\Z^{n}$ of size $k$, and set $B=\Upsilon(A)$. 
Given an integer 
$$M>\max\limits_{(z_1,\dots,z_{n}) \in B}\max\limits_{j\in [1,n]} n|z_j|,$$
we define the 
homomorphism $\varphi: \Z^{n} \to \Z$, as follows: 
$$\varphi(x_1,\dots,x_{n})=\sum\limits_{i=1}^{n} x_i M^{i-1}.$$
Because of the choice of $M$, the subset $B$ and the 
kernel of $\varphi$ are disjoint. Namely, suppose that there exists $b=(y_1,y_2,\ldots,y_n) \in B$ such 
that $\varphi(b)=0$.
We can assume that $y_{s_1}, \ldots,y_{s_c}$ are all nonnegative integers and that 
$y_{t_1}, \ldots,y_{t_d}$ are all negative integers.
Then, we can write 
$$\sum_{j=1}^c y_{s_j} M^{s_j-1} =\sum_{j=1}^d (-y_{t_j}) M^{t_j-1}.$$
We can look at the two sides of this equality as two expansions in base $M$ of the same nonnegative integer, 
since the coefficients $y_{s_1},\ldots,y_{s_c}, y_{t_1},\ldots,y_{t_d}$ all belong to the set $[0,M-1]$.
The uniqueness of such expansion implies that all these coefficients are zero, i.e., that $b=0$.
It follows from Lemma \ref{cambiogruppo} that Alspach's conjecture holds for the subset $A$.
\end{proof}

From the previous proposition we deduce this result.

\begin{thm}\label{torsionfree}
Let $k$ be a positive integer and suppose that, for infinitely many primes $p$,  Alspach's conjecture holds in $\Z_p$ for 
any subset of size $k$. Then Alspach's conjecture  holds for any  subset of size $k$ in any torsion-free abelian group 
$G$. 
\end{thm}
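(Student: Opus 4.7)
The plan is to combine Propositions \ref{prop:Z} and \ref{prop:Zn} with the structure theorem for finitely generated torsion-free abelian groups. First I would invoke Proposition \ref{prop:Z}: under the given hypothesis on infinitely many primes $p$, Alspach's conjecture holds in $\Z$ for subsets of size $k$. Feeding this into Proposition \ref{prop:Zn}, I conclude that Alspach's conjecture holds in $\Z^n$ for every $n \geq 2$ (and trivially for $n=1$) for every subset of size $k$.

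Next, given a torsion-free abelian group $G$ and a nice subset $A \subseteq G \setminus \{0_G\}$ of size $k$, I would pass to the subgroup $H = \langle A \rangle \leq G$. Since $H$ is finitely generated (by at most $k$ elements) and torsion-free (as a subgroup of $G$), the structure theorem gives $H \cong \Z^n$ for some $n$ with $1 \leq n \leq k$. Moreover, the conditions defining niceness are intrinsic: $0_H = 0_G$ so $0_H \notin A$, and $\sum_{z \in A} z$ is the same whether computed in $H$ or in $G$, so it is nonzero in $H$. Hence $A$ is a nice subset of $H \cong \Z^n$ of size $k$.

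By the previous step, Alspach's conjecture holds for $A$ viewed inside $H$, so there is an ordering $\omega$ of $A$ whose partial sums in $H$ are nonzero and pairwise distinct. Since the inclusion $H \hookrightarrow G$ is an injective homomorphism, these partial sums remain nonzero and pairwise distinct in $G$, completing the proof.

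I do not expect any genuine obstacle here: the whole argument is a chain of reductions using machinery already established in the preceding propositions, with only the (standard) structure theorem for finitely generated torsion-free abelian groups supplied from outside. The mildly delicate point is just the verification that niceness descends to $H$, which follows from the fact that $H$ is a subgroup containing $A$. Alternatively, one could bypass the structure theorem entirely by building an explicit homomorphism $H \to \Z$ avoiding $\Upsilon(A)$ via Lemma \ref{cambiogruppo}, using a generating set of $H$ and the base-$M$ trick of Proposition \ref{prop:Zn}, but invoking the structure theorem is shorter.
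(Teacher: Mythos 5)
Your proposal is correct and follows essentially the same route as the paper: reduce via Propositions \ref{prop:Z} and \ref{prop:Zn} to $\Z^n$, then use the structure theorem to identify $H=\langle A\rangle$ with a free abelian group of rank at most $k$ and transfer the ordering back to $G$. The only cosmetic difference is that the paper views $H$ as a subgroup of $\Z^k$ rather than as $\Z^n$ for some $n\leq k$, and you spell out the (easy) verification that niceness and distinctness of partial sums pass between $H$ and $G$.
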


\begin{proof}
Let $A$ be a nice subset of $G$ of size $k$. Denote by $H$ the subgroup of $G$ generated by $A$. 
We can apply to $H$ the structure theorem for finitely generated abelian groups, obtaining that $H$ 
is isomorphic to a subgroup of $\Z^{k}$. So, we can view $A$ as a nice subset of $\Z^k$.
Since, by hypothesis, we are assuming the validity of Alspach's conjecture in $\Z_p$  for infinitely many primes $p$ and 
for any subset of size $k$, by Propositions \ref{prop:Z} and \ref{prop:Zn},
Alspach's conjecture holds in $\Z^k$ for any subset of size $k$. In particular it holds for $A$.
\end{proof}

Now, from Theorem \ref{torsionfree} and the results cited in the introduction we obtain:

\begin{cor}\label{torsionfree10}
Alspach's conjecture holds for any subset of size $k\leq 11$ of any 
torsion-free abelian group. 
\end{cor}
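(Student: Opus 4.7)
The plan is to combine Theorem \ref{torsionfree} with the cases of Alspach's conjecture already established in the literature and listed in the introduction. The corollary is essentially a bookkeeping exercise: Theorem \ref{torsionfree} reduces the statement about arbitrary torsion-free abelian groups to a statement about $\Z_p$ for infinitely many primes $p$, and the existing literature already supplies this input for every $k \leq 11$.

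More concretely, I would proceed as follows. Fix $k \leq 11$ and a nice subset $A$ of a torsion-free abelian group $G$ of size $k$. By Theorem \ref{torsionfree}, it suffices to verify that there exist infinitely many primes $p$ for which Alspach's conjecture is valid in $\Z_p$ for all nice subsets of size $k$. For $k \leq 9$, item (1) of the introduction gives the conjecture in any abelian group, hence certainly in every $\Z_p$. For $k = 10$, item (2) yields the conjecture in $\Z_p$ for every prime $p$. For $k = 11$, item (3) does the same. In all three regimes, the hypothesis of Theorem \ref{torsionfree} is satisfied (in fact for all primes, not merely infinitely many), so the conclusion follows.

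There is no real obstacle here: the content has already been absorbed into Theorem \ref{torsionfree}, and what remains is to notice that the quantitative bound $k \leq 11$ matches exactly the range covered by the cited results (1)--(3). The only small subtlety worth remarking on is that items (2) and (3) are stated for cyclic groups of prime order, which is precisely the form in which Theorem \ref{torsionfree} consumes them; no finiteness hypothesis on $G$ enters, because the structure theorem step inside the proof of Theorem \ref{torsionfree} only uses the subgroup $\langle A \rangle$, which is finitely generated and torsion-free, hence a subgroup of $\Z^k$.
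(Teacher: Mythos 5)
Your proposal is correct and matches the paper's own (implicit) argument exactly: the corollary is stated as an immediate consequence of Theorem \ref{torsionfree} together with the known cases (1)--(3) from the introduction, which cover $k\leq 9$ in any abelian group and $k=10,11$ in $\mathbb{Z}_p$ for every prime $p$. Nothing further is needed.
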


\section{An asymptotic result}\label{asy}

Given an element $g$ of an abelian group $G$, we denote by $o(g)$ the cardinality of the cyclic subgroup $\langle g 
\rangle$ generated by $g$. Furthermore, we set 
$$\vartheta(G)=\min_{0_G\neq g \in G} o(g).$$
Now we are ready to prove that, if $\vartheta(G)$ is large enough, Alspach's conjecture is true for $k\leq 11$. 
This result can be deduced from the compactness theorem of the first order logic but, here, we give a more direct proof.
\begin{thm}\label{thm:as}
Under the hypotheses of {\rm Theorem \ref{torsionfree}}, there exists a positive integer $N(k)$ such that 
Alspach's conjecture holds for any subset of size $k$ of any abelian group $G$ such that $\vartheta(G)>N(k)$.
\end{thm}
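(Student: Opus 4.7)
The plan is a proof by contradiction that unrolls the compactness argument alluded to in the paper. I assume Alspach's conjecture fails for some nice subset $A=\{a_1,\dots,a_k\}\subset G$ with $\vartheta(G)>N(k)$ and derive a counterexample in a torsion-free abelian group, contradicting Theorem \ref{torsionfree}. The first step is to parametrise in a finite way how Alspach's conjecture can fail: for each $\sigma\in\Sym(k)$, failure of the ordering $\omega_\sigma$ yields either the nontrivial relation $a_{\sigma(1)}+\cdots+a_{\sigma(i)}=0$ (when $s_i(\omega_\sigma)=0$) or $a_{\sigma(i+1)}+\cdots+a_{\sigma(j)}=0$ (when $s_i(\omega_\sigma)=s_j(\omega_\sigma)$ for some $i<j$); both are $\{0,1\}$-linear relations among the $a_l$, chosen from $k+\binom{k}{2}$ alternatives. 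I call a \emph{failure pattern} $\mathcal{P}=(R_\sigma)_{\sigma\in\Sym(k)}$ any choice of one such relation per $\sigma$; the total number of failure patterns is at most $\bigl(k+\binom{k}{2}\bigr)^{k!}$, a quantity depending only on $k$.

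To each pattern $\mathcal{P}$ I associate the finitely presented abelian group $H_\mathcal{P}=\Z^k/L_\mathcal{P}$, where $L_\mathcal{P}$ is generated by the coefficient vectors of the relations in $\mathcal{P}$, and write $H_\mathcal{P}\cong\Z^{k'_\mathcal{P}}\oplus T_\mathcal{P}$ with $T_\mathcal{P}$ finite. I then set $N(k)=\max_\mathcal{P}\exp(T_\mathcal{P})$, a finite integer. If $A\subset G$ is a counterexample with $\vartheta(G)>N(k)$, then after replacing $G$ by $\langle A\rangle$ (which does not decrease $\vartheta$), selecting a witnessing relation for each $\sigma$ gives a failure pattern $\mathcal{P}$ and a surjective homomorphism $\pi\colon H_\mathcal{P}\to G$ sending the standard generators $a_l$ of $H_\mathcal{P}$ to the elements of $A$. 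Since $A$ is nice of size $k$ in $G$, the set $\{a_1,\dots,a_k\}\subset H_\mathcal{P}$ is also nice of size $k$ (otherwise $\pi$ would force a niceness failure downstairs).

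The crucial observation is that $\Upsilon(\{a_1,\dots,a_k\})\cap T_\mathcal{P}=\emptyset$: any $u$ in the intersection would have order dividing $\exp(T_\mathcal{P})\leq N(k)$ in $H_\mathcal{P}$, so $\pi(u)\in\Upsilon(A)$ would be a nonzero element of $G$ of order at most $N(k)$, contradicting $\vartheta(G)>N(k)$. In particular $k'_\mathcal{P}\geq 1$, and the projection $\varphi\colon H_\mathcal{P}\to H_\mathcal{P}/T_\mathcal{P}\cong\Z^{k'_\mathcal{P}}$ has kernel disjoint from $\Upsilon(\{a_1,\dots,a_k\})$. Applying Lemma \ref{cambiogruppo} to $\varphi$, together with Theorem \ref{torsionfree} (which gives Alspach's conjecture in the torsion-free group $\Z^{k'_\mathcal{P}}$ under our hypothesis), I conclude that Alspach's conjecture holds for $\{a_1,\dots,a_k\}$ in $H_\mathcal{P}$. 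But by construction every ordering $\omega_\sigma$ of $\{a_1,\dots,a_k\}$ satisfies the failing relation $R_\sigma$ in $H_\mathcal{P}$, so Alspach's conjecture fails for $\{a_1,\dots,a_k\}$ in $H_\mathcal{P}$---a contradiction. The main obstacle is recognising that the failure of Alspach's conjecture is captured by a finite combinatorial datum (the failure pattern), so that only finitely many universal obstruction groups $H_\mathcal{P}$ need to be inspected; once this is done, the uniform bound $N(k)$ is immediate and the torsion-free reduction via Lemma \ref{cambiogruppo} becomes routine.
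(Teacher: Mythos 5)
Your proof is correct, but it takes a genuinely different route from the paper. The paper argues by contradiction with an infinitary compactness-style construction: assuming no $N(k)$ exists, it takes a sequence of counterexample groups $G_M$ with $\vartheta(G_M)\to\infty$, applies the pigeonhole principle to the finitely many ``failure functions'' $\Sym(k)\to[1,k]\times[1,k]$ to extract an infinite subsequence with a common failure function, forms the reduced product $\bigl(\prod_i G_{M_i}\bigr)/\!\approx$ (modulo finitely supported sequences), verifies this quotient is torsion-free, and transplants the common counterexample into it, contradicting Theorem \ref{torsionfree}. You instead localise the finiteness at the level of a single counterexample: you encode each failing ordering by a $\{0,1\}$-relation, observe that only finitely many ``failure patterns'' and hence only finitely many presented groups $H_\mathcal{P}=\Z^k/L_\mathcal{P}$ can arise, and define $N(k)$ as the maximum exponent of their torsion parts; the hypothesis $\vartheta(G)>N(k)$ then forces $\Upsilon$ to miss the torsion subgroup, so Lemma \ref{cambiogruppo} applied to $H_\mathcal{P}\to H_\mathcal{P}/T_\mathcal{P}\cong\Z^{k'}$ together with Theorem \ref{torsionfree} contradicts the fact that every ordering fails in $H_\mathcal{P}$ by construction. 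All the steps check out (niceness and distinctness descend to $H_\mathcal{P}$ because they hold in $G$; $\pi$ maps $\Upsilon$ computed in $H_\mathcal{P}$ into $\Upsilon(A)$, which avoids $0_G$). What your approach buys is an in-principle effectively computable bound $N(k)$ --- the lattices $L_\mathcal{P}$ are generated by $\{0,1\}$-vectors, so the torsion exponents can be read off Smith normal forms --- whereas the paper's argument is purely existential; what the paper's approach buys is brevity and a single reusable torsion-free limit object, at the cost of any explicit control on $N(k)$.
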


\begin{proof}
Let us suppose, for sake of contradiction, that  such $N(k)$ does not exist.
It means that, for any positive integer $M$, there exists an abelian  group $G_M$ such that $\vartheta(G_M)>M$ and 
there exists a nice subset 
$A_M=\{a_{M,1},a_{M,2},\ldots,a_{M,k}\}$ of $G_M$ of size $k$ that contradicts  Alspach's conjecture.
Therefore, for any ordering $\omega=(a_{M,j_1}, a_{M,j_2}, \ldots, a_{M,j_k})$ of 
$A_M$, there exists a pair $(i,j)$, with $i,j \in [1,k]$, such that $s_i(\omega)=s_j(\omega)$. 
Choosing for each $\omega$ one of these pairs, for any positive integer $M$ we can define the function 
$f_M: \Sym(k) \to [1,k]\times [1,k]$ that maps 
$\sigma_\omega=(j_1,j_2,\ldots,j_k)$ into this chosen pair.
Since there are only finitely many  maps from $\Sym(k)$ to $[1,k]\times [1,k]$, there exists an infinite sequence 
$M_1,\ldots, M_n,\ldots$ such that $f_{M_1}=f_{M_i}$ for all $i\geq 1$. 

Let us consider the group 
$G=\bigtimes\limits_{i=1}^{\infty} G_{M_i}$ and the following equivalence relation on $G$.
Given $x=(x_i),y=(y_i)\in G$, we set $x\approx y$ whenever $x_i\neq y_i$  only on a finite number of indices $i$.
Since the equivalence class $[0]$ consists of the elements $(x_i)$ of $G$ that are nonzero on a 
finite number of coordinates $x_i$, and so it is a subgroup of $G$, 
the quotient set  $H=G/\approx$  is still an abelian group. 

Now we want to prove that $H$ is torsion-free. Let us suppose for sake of contradiction that there exists 
an element $[0]\neq [x]\in H$ of finite order, say $n$. Let $\pi_j: G\to G_{M_j}$ be the canonical projection on $G_{M_j}$.
For any $i$ such that $M_i>n$, either $\pi_i(x)=0_{G_{M_i}}$ or 
we have $n\cdot \pi_{i}(x)\not=0_{G_{M_i}}$. However, 
since $n\cdot [x]=[0]$ in $H$ and due to the definition of $\approx$,
we should have $n\cdot \pi_{i}(x)=0_{G_{M_i}}$, for $i$ large enough. It follows that $\pi_{i}(x)$ is eventually zero but 
this is a contradiction since $[x]$ is nonzero. Therefore $H$ is torsion-free.
 
Now we consider the following subset $A$ of $H$:
$$A=\{[z_1],[z_2],\dots,[z_k]\} \mbox{ where } (z_j)_\ell=a_{ M_{\ell},j}.$$
Clearly, $A$ is a nice subset. Given an ordering 
$\omega=([z_{j_1}],[z_{j_2}],\ldots,[z_{j_k}])$ of $A$ we define the ordering 
$\omega_\ell=(a_{M_{\ell},j_1},a_{ M_{\ell},j_2},\ldots,$ $a_{M_\ell, j_k})$ of $A_{M_\ell}$ that 
corresponds to the same permutation $\sigma_\omega$. 
As $f_{M_1}=f_{M_\ell}$ for all $\ell\geq 1$, there exists a pair 
$(i,j)$ such that $s_i(\omega_\ell)=s_j(\omega_\ell)$ for all $\ell$. 
Since $(s_i(\omega_1),s_i(\omega_2),\dots, s_i(\omega_\ell),\dots)$ belongs to the equivalence class $s_i(\omega)$, it easily 
follows that $s_i(\omega)=s_j(\omega)$ also for the set $A$. It means that $A$ is a 
counterexample to Alspach's conjecture. Since this is in contradiction with 
Theorem \ref{torsionfree}, we have proved the statement. 
\end{proof}

\begin{cor}\label{cor:as}
Let $k\leq 11$ be a positive integer. Then, there exists a positive integer $N(k)$ such that  Alspach's conjecture holds 
in $\mathbb{Z}_n$ for any subset of size $k$ whenever the prime factors of $n$ are all greater than $N(k)$.
\end{cor}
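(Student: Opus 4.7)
The plan is to obtain the corollary as a direct specialization of Theorem \ref{thm:as} to the family $\{\Z_n\}$, after checking that the hypothesis needed there, namely the hypothesis of Theorem \ref{torsionfree}, is actually satisfied for every $k \leq 11$.

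First, I would record that for every $k \leq 11$, Alspach's conjecture is known to hold in $\Z_p$ for all sufficiently large primes $p$ (in fact, by items (1)--(3) of the list in the introduction, for all primes $p$). In particular it holds for infinitely many primes, so the hypothesis of Theorem \ref{torsionfree} is met. Applying Theorem \ref{thm:as} with this $k$ then furnishes an integer $N(k)$ such that Alspach's conjecture holds for every nice subset of size $k$ in every abelian group $G$ with $\vartheta(G) > N(k)$.

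The second step is to compute $\vartheta(\Z_n)$. For a nonzero $g \in \Z_n$, one has $o(g)=n/\gcd(g,n)$; minimizing this over $g \neq 0$ amounts to maximizing $\gcd(g,n)$ subject to the constraint that $\gcd(g,n) < n$, which forces $\gcd(g,n)=n/p$ where $p$ is the smallest prime factor of $n$. Hence $\vartheta(\Z_n)$ equals the smallest prime divisor of $n$. Consequently, if every prime factor of $n$ is greater than $N(k)$, then $\vartheta(\Z_n) > N(k)$, and Theorem \ref{thm:as} applies directly to $G = \Z_n$, yielding the conclusion.

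There is no real obstacle here: the content of the corollary has been pushed entirely into Theorem \ref{thm:as}, and the only task is the elementary identification of $\vartheta(\Z_n)$ with the least prime factor of $n$. The only thing to be careful about is the verification that, for $k \leq 11$, the hypothesis of Theorem \ref{torsionfree} is indeed satisfied, which reduces to citing the known validity of Alspach's conjecture in $\Z_p$ for $k \leq 11$ from the references listed in the introduction.
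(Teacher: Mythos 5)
Your proposal is correct and is exactly the intended derivation: the paper leaves the corollary without an explicit proof, relying on the reader to combine Theorem \ref{thm:as} (whose hypothesis is verified for $k\leq 11$ via the known results cited in the introduction, as in Corollary \ref{torsionfree10}) with the observation that $\vartheta(\Z_n)$ is the least prime factor of $n$. Your identification of $\vartheta(\Z_n)$ and the verification of the hypothesis of Theorem \ref{torsionfree} are both accurate, so nothing is missing.
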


\section{Implications on other conjectures}\label{altre}

We consider here two conjectures related to the Alspach's one. These conjectures have been recently studied mainly in 
relation 
to (relative) Heffter arrays and their application for constructing cyclic cycle decompositions of (multipartite) 
complete graphs, see \cite{A, ADDY, CDDY, CMPPHeffter, RelH, DW}.

\subsection{The G-ADMS conjecture}

In \cite{JOS} the validity of Conjecture \ref{Conj:als} was proved for any cyclic group $\Z_p$, where $p$ is a prime, 
whenever $k=|A|\leq 10$. This result was achieved 
using a polynomial method based on the Alon's combinatorial Nullstellensatz. 

\begin{thm}\cite[Theorem 1.2]{Alon}\label{thm:alon}
Let $\F$ be a field and let $f=f(x_1,\ldots,x_k)$ be a polynomial in $\F[x_1,\ldots,x_k]$. Suppose the degree of $f$ is 
$\sum\limits_{i=1}^k t_i$, 
where each $t_i$ is a nonnegative integer, and suppose the coefficient of
$\prod\limits_{i=1}^k x_i^{t_i}$ in $f$ is nonzero.
Then, if $A_1,\ldots,A_k$ are subsets of $\F$ with $|A_i|> t_i$, there are $a_1 \in A_1,\ldots,a_k \in A_k$ so that 
$f(a_1,\ldots,a_k)\neq 0$.
\end{thm}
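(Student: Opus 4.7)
The plan is to follow the standard two-step strategy for Alon's combinatorial Nullstellensatz: first establish a polynomial-division lemma (often called Combinatorial Nullstellensatz I), then deduce the coefficient statement by contradiction and degree accounting. For step one, I would fix subsets $A_i' \subseteq A_i$ of size exactly $t_i + 1$ and introduce the monic polynomials $g_i(x_i) = \prod_{a \in A_i'}(x_i - a)$, each of degree $t_i + 1$. The lemma to prove asserts that if $f$ vanishes on every point of $A_1' \times \cdots \times A_k'$, then there exist polynomials $h_1,\ldots,h_k \in \F[x_1,\ldots,x_k]$ with $\deg(h_i g_i) \leq \deg(f)$ such that $f = \sum_{i=1}^k h_i g_i$. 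The proof proceeds by iterative monomial reduction: whenever a monomial of $f$ contains $x_i$ to a power at least $t_i + 1$, replace the factor $x_i^{t_i+1}$ by $x_i^{t_i+1} - g_i(x_i)$, which strictly decreases the $x_i$-degree of that monomial while producing an extra term absorbable into $h_i g_i$. After finitely many steps one arrives at a remainder $r$ of individual degree at most $t_i$ in each $x_i$ that still vanishes on $A_1' \times \cdots \times A_k'$; a straightforward induction on $k$ (with base case that a univariate polynomial of degree at most $t_1$ having $t_1 + 1$ distinct roots must be identically zero) forces $r = 0$.

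For step two, suppose for contradiction that $f(a_1,\ldots,a_k) = 0$ for every choice of $a_i \in A_i'$. Apply the lemma to write $f = \sum_{i=1}^k h_i g_i$ with $\deg(h_i g_i) \leq \deg(f) = \sum_j t_j$. The key observation is that, for each $i$, any monomial of $h_i g_i$ of total degree exactly $\sum_j t_j$ arises from the top-degree homogeneous part of $h_i$ multiplied by the leading term $x_i^{t_i+1}$ of $g_i$, and therefore has $x_i$-degree at least $t_i + 1$. Since the target monomial $\prod_{j=1}^k x_j^{t_j}$ has $x_i$-degree only $t_i$, it cannot appear in any $h_i g_i$. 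Hence its coefficient in $f = \sum_i h_i g_i$ is zero, contradicting the hypothesis.

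The main obstacle I anticipate is the degree bookkeeping inside the reduction step: the substitution $x_i^{t_i+1} \mapsto x_i^{t_i+1} - g_i(x_i)$ transfers a $g_i$-multiple out of $f$ into the $h_i g_i$ contribution, and one must verify carefully that this transfer never pushes $\deg(h_i g_i)$ above $\deg(f)$. The cleanest way to preserve this invariant is to order the reductions monomial by monomial, always processing terms of highest total degree first, so that all newly generated summands have total degree no greater than the monomial just reduced; an outer induction on a suitable well-founded order (for instance, lexicographic on $(\deg f, \deg_{x_1} f, \ldots, \deg_{x_k} f)$) then terminates the procedure and yields the desired decomposition.
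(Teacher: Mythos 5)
This statement is quoted verbatim from Alon's paper (\cite[Theorem 1.2]{Alon}) and the present paper gives no proof of it, so there is nothing internal to compare against. Your argument is the standard and correct proof of the Combinatorial Nullstellensatz, essentially identical to Alon's original one: the division lemma $f=\sum_i h_i g_i$ with $\deg(h_ig_i)\le\deg(f)$ obtained by monomial reduction against $g_i(x_i)=\prod_{a\in A_i'}(x_i-a)$, followed by the degree count showing that the coefficient of $\prod_i x_i^{t_i}$ in each $h_ig_i$ must vanish because any top-degree monomial of $h_ig_i$ carries $x_i$-degree at least $t_i+1$. Both steps, including the degree bookkeeping you flag as the delicate point, are handled correctly.
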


In order to apply this theorem for proving Alspach's conjecture for subsets of $\Z_p$ of size $k$, J. Hicks,  M.A. 
Ollis 
and J.R. Schmitt constructed a suitable homogeneous polynomial $F_k$ of degree $k(k-1)-1$, identifying a monomial with 
nonzero coefficient such that the degree of each of its terms $x_i$ is less than $|A|=k$ (clearly, we may assume 
$p>k$). 
The existence of values $a_1,\ldots,a_k\in A$ such that
$F_k(a_1,\ldots,a_k)\neq 0$, given by Theorem \ref{thm:alon}, implies that
$\omega=(a_1,\ldots,a_k)$ is an ordering of the elements of $A$ satisfying the requirements of Conjecture 
\ref{Conj:als}.

We recall here the polynomials given in \cite{JOS}. For every $k\geq 2$, let 
$$g_k(x_1,\ldots,x_k)=\prod_{1\leq i < j \leq k} (x_j-x_i)(x_i+\ldots+x_j)\in \Z[x_1,\ldots,x_k]$$
and
\begin{equation}\label{Fk}
F_k(x_1,\ldots,x_k)=\frac{g_{k}(x_1,\ldots,x_k) }{x_1+\ldots+x_k} \in \Z[x_1,\ldots,x_k] .
\end{equation}
Observe that we can also define $F_k$ recursively.
Define $G_\ell\in \Z[x_1,x_2,\ldots,x_{\ell+1}]$, 
where $2\leq \ell\leq k-1$, as follows:
$$G_\ell=(x_1+\ldots+x_\ell)(x_{\ell+1} - x_1)\cdot \prod_{i=2}^\ell 
(x_{\ell+1}-x_i)(x_i+\ldots+x_{\ell+1}).$$
Then,  $F_\ell\in \Z[x_1,x_2,\ldots,x_{\ell}]$ can be defined as
$$F_2=x_2-x_1\quad \textrm{ and  } \quad F_{\ell}=F_{\ell-1}\cdot G_{\ell-1} \quad \textrm{ for } 3\leq \ell\leq k.$$

Now, consider the monomial 
$$\wt c_{k,j}=c_{k,j} x_1^{k-1}\cdots x_{j-1}^{k-1}\cdot x_j^{k-2}\cdot  x_{j+1}^{k-1}\cdots x_k^{k-1}$$
of $F_k$, where $1\leq j \leq k$ and $c_{k,j} \in \Z$.
In \cite[Table 1]{JOS} the authors described the coefficients $c_{k,j}$ for $k\leq 10$, showing 
that either $\gcd(c_{k,1},\ldots,c_{k,k})$ equals to $1$ or its prime factors are all less than $k$. 
This means that, 
for any prime $p>k$, there exists a coefficient $c_{k,j}$ which is nonzero modulo $p$, proving the validity of 
Conjecture \ref{Conj:als} for subsets of $\Z_p$ of size $k$.

We  can use Alon's combinatorial Nullstellensatz  to prove  G-ADMS conjecture.
Also in this case, we use the polynomial provided by \cite{JOS}.
For any $k\geq 2$, let
\begin{equation}\label{fk+1}
f_{k+1}(x_1,\ldots,x_{k+1})=g_{k}(x_2,x_3,\ldots,x_{k+1})\cdot \prod_{j=2}^{k+1} (x_j-x_1)\in 
\Z[x_1,\ldots,x_{k+1}].
\end{equation}
Note that $f_{k+1}$ is an homogeneous polynomial of degree $k^2$.
By Theorem \ref{thm:alon}, to prove the validity of Conjecture \ref{Conj:ADMS} for subsets of $\Z_p$ of size 
$k+1$,
it suffices to find a monomial of $f_{k+1}$ with 
nonzero coefficient such that the degree of each of its terms $x_i$ is less than $k+1$.

Instead of working directly with the polynomial $f_{k+1}$, we show how to use the polynomial $F_k$  also for 
proving the validity of G-ADMS conjecture via Alon's combinatorial Nullstellensatz.
The main advantage is computational: instead of working with a polynomial of degree $k^2$ in $k+1$ indeterminates, 
we work with a polynomial of degree $k^2-k-1$ in $k$ indeterminates.
Hence, consider the monomial
$$\wt d_{k+1,j}=d_{k+1,j}\cdot x_1^{k}\cdots x_{j-1}^{k}\cdot  x_{j+1}^{k}\cdots x_{k+1}^{k}$$
of $f_{k+1}(x_1,\ldots,x_{k+1})$, where $j \in [1, k+1]$ and $d_{k+1,j}\in \Z$.

In order to apply Theorem \ref{thm:alon}, we would like to determine the values of the coefficients $d_{k+1,j}$.
To this purpose, take the monomial
$$\wt e_{k,j}=e_{k,j}\cdot x_1^{k}\cdots x_{j-1}^{k}\cdot  x_{j+1}^{k}\cdots x_{k}^{k}$$
of $g_{k}(x_1,\ldots,x_k)$, where $j \in [1, k]$ and $e_{k,j}\in \Z$.
If $j\geq 2$, then  $x_1^k$ divides $\wt d_{k+1,j}$ and so, by \eqref{fk+1}, the value of $(-1)^k\cdot d_{k+1,j}$ coincides with
the coefficient of $x_2^{k}\cdots x_{j-1}^{k}\cdot  x_{j+1}^{k}\cdots x_{k+1}^{k}$ in $g_k(x_2,\ldots,x_{k+1})$.
In other words, we obtain that 
$$d_{k+1,j+1}=(-1)^k e_{k,j} \quad \textrm{ for all } j \in [1,\ldots,k].$$

Hence, the problem of computing the values of the coefficients $d_{k+1,j+1}$  is equivalent to the problem 
of computing the coefficients $e_{k,j}$  of $g_k=(x_1+\ldots+x_k)\cdot F_k$, see \eqref{Fk}.
So, for any $i,j \in [1,k]$ such that $i\neq j$,  take the monomial
$$\wt a^{(k)}_{i,j}=a^{(k)}_{i,j}\cdot x_i^{k-1} \cdot \prod_{\substack{1\leq r\leq k\\ r\neq i,j}} x_r^k $$
of $F_k$, where $a^{(k)}_{i,j}\in \Z$.
Then, for all $j \in [1,k]$ we have
$$(-1)^k d_{k+1,j+1}=e_{k,j}=a^{(k)}_{1,j}+\ldots + a^{(k)}_{j-1,j}+ a^{(k)}_{j+1,j}+\ldots + a^{(k)}_{k,j}.$$

For instance, using the routines for Magma given in Appendix \ref{app}, we determine the values of $a_{i,j}^{(6)}$, 
see Table 
\ref{tab:6}.
More in general, for $k \in [3,10]$, we obtain that
$$e_{k,j}=(-1)^{\left\lfloor \frac{k-1}{2} \right\rfloor } c_{k,j} \quad \textrm{ for all } j \in [1,k].$$
It would be very interesting to prove this equality for every value of $k$.

\begin{table}[ht]
$$\begin{array}{c|rrrrrr|r} 
j \setminus i & 1 &  2 & 3 & 4 & 5 & 6 & e_{6,j}\\ \hline
1 &  {} &  -28 &   -40 &   -20 &   20  &   40 & -28\\ 
2 &  28  & & -28 & -40 &  -20 &   20 &  -40 \\
3 &  40 &  28 & &   -28 &   -40 &   -20  & -20\\
4 &  20 &    40  &  28 & &   -28 &   -40 & 20 \\
5 &   -20 &  20 &   40 &  28 & &   -28 & 40 \\ 
6 &   -40 &   -20&  20 &   40 &   28  & & 28
\end{array}$$
\caption{Values of the coefficients $a_{i,j}^{(6)}$.}\label{tab:6}
\end{table}

Considering the case $k=11$, with the same routines we obtain the values of the coefficients $e_{11,1}$ and $e_{11,2}$:
$$e_{11,1} = 18128730243333160,\qquad 
  e_{11,2} = 46383022877233608.$$
Since $\gcd(e_{11,1},e_{11,2}  )=2^3$,  the following result follows.

\begin{prop}\label{ADMS12}
{\rm G-ADMS} conjecture holds for subsets of size $k\leq 12$ of cyclic groups of prime order.
\end{prop}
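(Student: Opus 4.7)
The plan is to split the proof into two ranges according to the size $k$. For $k \leq 11$, I would recall that Conjecture \ref{Conj:als} is known to hold in $\Z_p$ for every prime $p$ and every nice subset of size $k$ (items (1)--(3) of the list in the introduction). The result of \cite{ADMS} cited just before Conjecture \ref{Conj:ADMS} states that if Alspach's conjecture holds in $\Z_n$ for sets of size at most $k$, then so does G-ADMS in $\Z_n$ for sets of size at most $k$. Applied to $\Z_p$, this settles sizes up to $11$ at once.

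The substantive case is $k = 12$. Here the plan is to apply Alon's combinatorial Nullstellensatz (Theorem \ref{thm:alon}) to the polynomial $f_{12} \in \Z[x_1, \ldots, x_{12}]$ of degree $121$ defined in \eqref{fk+1}. A direct inspection of the factorization of $f_{12}$ shows that $f_{12}(a_1, \ldots, a_{12}) \neq 0$ forces the $a_i$ to be pairwise distinct and each interval sum $a_i + a_{i+1} + \cdots + a_j$ with $2 \leq i < j \leq 12$ to be nonzero. Since $|A| = 12$, this means $(a_1, \ldots, a_{12})$ is an ordering of $A$ whose partial sums $s_1, \ldots, s_{12}$ are pairwise distinct, as required. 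I would target the monomials $\wt d_{12, j+1}$ in which every variable has exponent $11$ except $x_{j+1}$, which has exponent $0$: such a profile is compatible with Alon's theorem applied with $A_i = A$, since $|A| = 12 > 11$.

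The hard part is to certify a nonzero coefficient modulo every prime $p \geq 13$ (which are the only primes for which a $12$-subset of $\Z_p \setminus \{0\}$ can exist). Rather than expanding $f_{12}$ directly, I would exploit the chain of identities $d_{12,j+1} = (-1)^{11} e_{11,j}$ and $e_{11,j} = \sum_{i \neq j} a^{(11)}_{i,j}$ derived in the preceding discussion, so that the required coefficients are read off from $F_{11}$, a polynomial of degree $109$ in only $11$ variables. A Magma computation along the lines of Appendix \ref{app} produces
\[ e_{11,1} = 18128730243333160, \qquad e_{11,2} = 46383022877233608, \]
and the key arithmetic observation is that $\gcd(e_{11,1}, e_{11,2}) = 2^3$. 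Therefore, for every prime $p \geq 13$ at least one of $d_{12,2}$ and $d_{12,3}$ is nonzero modulo $p$, and Theorem \ref{thm:alon} supplies the desired ordering of $A$. The main obstacle is purely computational: producing $F_{11}$ explicitly and extracting the correct monomial coefficients within reasonable memory; the identity-based reduction to $F_{11}$ is precisely what makes this feasible.
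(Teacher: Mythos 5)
Your proposal is correct and follows essentially the same route as the paper: reduce the $k\leq 11$ cases to Alspach's conjecture via the implication from \cite{ADMS}, and handle $k=12$ by applying Theorem \ref{thm:alon} to $f_{12}$, computing the coefficients $d_{12,j+1}=(-1)^{11}e_{11,j}$ through the reduction to $F_{11}$, and observing that $\gcd(e_{11,1},e_{11,2})=2^3$ is coprime to every prime $p\geq 13$. The only cosmetic difference is that the paper presents the reduction machinery and the numerical values before the proposition rather than inside a proof environment.
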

%
%
%
%
%
%
%
%

One can easily adjust to G-ADMS conjecture the arguments 
of Sections \ref{torsion} and \ref{asy}. Clearly, some small modifications are required.
In particular, given an abelian group $(G,+)$, a finite subset $A$ of $G$ is nice for the 
G-ADMS conjecture if $0_G\not \in A$; also, given a  nice subset $A$ of $G$, let
$$\Upsilon(A)=A\cup\Delta(A).$$
Hence, from Proposition \ref{ADMS12} we can deduce  the following results.

\begin{cor}
Given a torsion-free abelian group $G$, {\rm G-ADMS} conjecture is true for any subset $A\subset G\setminus \{0_G\}$ 
such that $|A|\leq 12$.
\end{cor}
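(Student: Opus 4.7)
The plan is to replay the argument of Section \ref{torsion} with the adjusted notions of ``nice'' and $\Upsilon$ appropriate for G-ADMS. First, I would prove an analogue of Lemma \ref{cambiogruppo}: if $\varphi:G_1\to G_2$ is a homomorphism with $\ker(\varphi)\cap\Upsilon(A)=\emptyset$ and G-ADMS holds in $G_2$ for subsets of size $|A|$, then it holds for $A$ in $G_1$. The proof is essentially unchanged. Disjointness of $\ker(\varphi)$ from $A$ guarantees that $\varphi(A)$ has no zero element and still has size $|A|$ (here we need $\Delta(A)\subset\Upsilon(A)$, so that $\varphi$ is injective on $A$). We then lift a G-ADMS ordering of $\varphi(A)$ to an ordering of $A$. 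The injectivity of $\varphi$ on the set of partial sums of the lifted ordering is controlled by disjointness of $\ker(\varphi)$ from $\Delta(A)$: indeed, any two partial sums $s_i,s_j$ with $i<j$ satisfy $s_j-s_i=a_{i+1}+\cdots+a_j$, and one checks as in Lemma \ref{cambiogruppo} that this difference cannot be zero without forcing a difference of distinct elements of $A$ into $\ker(\varphi)$. The condition on the total sum (and hence on the element $\sum_{z\in A}z$ inside the old $\Upsilon$) is no longer needed, which is exactly why $\Upsilon(A)=A\cup\Delta(A)$ suffices here.

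Next, I would copy Propositions \ref{prop:Z} and \ref{prop:Zn} verbatim, substituting the new $\Upsilon$. Proposition \ref{ADMS12} gives G-ADMS in $\Z_p$ for subsets of size at most $12$ for every prime $p$, and in particular for infinitely many primes. So, given a nice subset $A\subset\Z$ of size $k\leq 12$, choosing $p>\max_{z\in\Upsilon(A)}|z|$ prime and applying the adapted Lemma \ref{cambiogruppo} to the canonical projection $\pi_p:\Z\to\Z_p$ yields G-ADMS for $A$. For $\Z^n$, I would reuse the base-$M$ homomorphism $\varphi:\Z^n\to\Z$ defined by $\varphi(x_1,\dots,x_n)=\sum_{i=1}^n x_i M^{i-1}$ with $M$ larger than $n$ times the maximum absolute value of any coordinate of any element of $\Upsilon(A)$; uniqueness of base-$M$ expansion forces $\ker(\varphi)\cap\Upsilon(A)=\emptyset$ exactly as in Proposition \ref{prop:Zn}.

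Finally, for the general torsion-free abelian group $G$, given a nice subset $A$ of size $k\leq 12$, I would let $H$ be the subgroup of $G$ generated by $A$. Since $H$ is finitely generated and torsion-free (as a subgroup of $G$), the structure theorem identifies $H$ with a subgroup of $\Z^k$, so $A$ becomes a nice subset of $\Z^k$ of size $k$. Combining the two propositions above with Proposition \ref{ADMS12} (in the role that Theorem \ref{torsionfree} played for Alspach's conjecture) yields G-ADMS for $A$ in $\Z^k$, hence in $H$, hence in $G$.

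I do not anticipate a genuine obstacle: the whole point of the two bookkeeping modifications (dropping $\sum_{z\in A}z$ from $\Upsilon$ and dropping the nonzero-sum requirement from ``nice'') is precisely to make Section \ref{torsion}'s arguments go through unchanged. The only place demanding even a moment of care is checking that the new, smaller $\Upsilon(A)=A\cup\Delta(A)$ is still enough to guarantee both that $\varphi(A)$ is nice and that the partial sums of the pulled-back ordering remain distinct in $G_1$; this is immediate since distinctness of partial sums translates exactly into the condition $\Delta_{\mathrm{partial}}(A)\cap\ker(\varphi)=\emptyset$, and each nonzero partial-sum difference is a sum of a consecutive block of the ordering which, if it vanished in $G_2$ while the G-ADMS ordering in $G_2$ was valid, would contradict the very distinctness guaranteed there.
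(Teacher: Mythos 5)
Your proposal is correct and follows exactly the route the paper itself indicates (the paper's ``proof'' is precisely the remark that one redefines ``nice'' as $0_G\notin A$ and $\Upsilon(A)=A\cup\Delta(A)$ and then replays Lemma \ref{cambiogruppo}, Propositions \ref{prop:Z} and \ref{prop:Zn}, and Theorem \ref{torsionfree} on top of Proposition \ref{ADMS12}). The only simplification worth noting is that the distinctness of the lifted partial sums needs no appeal to $\Delta(A)$ at all --- since $\varphi(s_i(\omega_1))=s_i(\omega_2)$ and the latter are pairwise distinct, the former are too; $\Delta(A)\cap\ker(\varphi)=\emptyset$ is needed only to ensure $\varphi$ is injective on $A$, as you correctly observe in your closing paragraph.
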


\begin{cor}
There exists a positive integer $N$ such that {\rm G-ADMS} conjecture is true for any subset $A$ of 
$\mathbb{Z}_n\setminus\{0\}$ whenever $|A|\leq 12$ and the prime factors of $n$ are all greater than $N$.
\end{cor}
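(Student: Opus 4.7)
The plan is to mirror the two-step route used to deduce Corollary \ref{cor:as} from Proposition \ref{ADMS12}: first establish a G-ADMS analog of Theorem \ref{thm:as} phrased in terms of the invariant $\vartheta(G)$, then specialize to $G=\mathbb{Z}_n$. The key specialization is the elementary identity $\vartheta(\mathbb{Z}_n)=p$, where $p$ is the smallest prime dividing $n$: indeed $n/p$ has order $p$, and every nonzero element of $\mathbb{Z}_n$ has order of the form $n/d$, which is at least $p$. Hence the hypothesis ``all prime factors of $n$ exceed $N$'' translates into $\vartheta(\mathbb{Z}_n)>N$, and the corollary will follow immediately from the asymptotic statement.

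The intermediate statement to prove is: for each $k\leq 12$ there exists $N(k)$ such that G-ADMS holds for every subset of size $k$ of every abelian group $G$ with $\vartheta(G)>N(k)$. The proof transcribes the argument of Theorem \ref{thm:as} verbatim, with the G-ADMS version of ``nice'' and with $\Upsilon(A)=A\cup\Delta(A)$ as already introduced in the paragraph following Proposition \ref{ADMS12}. Specifically, I would assume toward a contradiction that no such $N(k)$ exists, produce groups $G_M$ with $\vartheta(G_M)>M$ carrying counterexample subsets $A_M$, and, for each ordering $\omega$ of $A_M$, pick an offending pair $(i,j)\in[1,k]\times[1,k]$ with $s_i(\omega)=s_j(\omega)$ (the negation of G-ADMS). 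A pigeonhole argument on the finite set of maps $\Sym(k)\to[1,k]\times[1,k]$ yields a subsequence $M_1,M_2,\ldots$ along which this pair-choosing function is constant.

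Next, I would form the direct product $G=\prod_{i=1}^{\infty}G_{M_i}$, quotient by the finite-support equivalence $\approx$, and verify exactly as in the original proof that $H=G/{\approx}$ is torsion-free: for any putative torsion element $[x]$ of order $n$, the condition $\vartheta(G_{M_i})>M_i>n$ holds for all large $i$ and forces the projections $\pi_i(x)$ to vanish eventually, contradicting $[x]\neq[0]$. Lifting the $A_{M_i}$ coordinatewise to a subset $A\subset H$ yields a G-ADMS counterexample in a torsion-free abelian group, contradicting the torsion-free corollary stated just above; this corollary is itself the output of the direct analogs of Lemma \ref{cambiogruppo} and Propositions \ref{prop:Z}, \ref{prop:Zn}, with the simpler $\Upsilon$.

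I do not expect any serious obstacle: the whole argument is a transcription of Section \ref{asy} with the G-ADMS bookkeeping, which is in fact slightly lighter (the failure of G-ADMS produces a single collision $s_i=s_j$, so the pigeonhole codomain remains $[1,k]\times[1,k]$, and we never need to track the all-sum element of $A$). The only points worth stating explicitly in the write-up are the $\vartheta$-identity for $\mathbb{Z}_n$ and the verification that the ultraproduct-style quotient remains torsion-free under the weaker hypothesis $\vartheta(G_{M_i})\to\infty$, both of which are immediate.
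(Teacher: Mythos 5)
Your proposal is correct and follows exactly the route the paper intends: the authors explicitly say the corollary is obtained by adjusting the arguments of Sections \ref{torsion} and \ref{asy} with the G-ADMS notion of nice set and $\Upsilon(A)=A\cup\Delta(A)$, which is precisely the transcription you carry out, including the specialization $\vartheta(\Z_n)=p$ for $p$ the smallest prime factor of $n$. Your observation that the bookkeeping is slightly lighter (only collisions $s_i=s_j$ need to be encoded in the pigeonhole step) is accurate and consistent with the paper's remark that only ``small modifications'' are required.
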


\subsection{The CMPP-conjecture}

In \cite{CMPP}, the following conjecture was proposed by the authors of the present paper, in collaboration with F. Morini and 
A. Pasotti.

\begin{conj}[CMPP]\label{Conj:nostra}
Let $G$ be an  abelian group. Let $A$ be a finite subset of $G\setminus\{0_G\}$
such that no $2$-subset $\{x,-x\}$ is contained in $A$ and
with the property that $\sum_{a\in A} a=0_G$.
Then there exists an ordering of the elements of $A$ such that the partial sums are all distinct.
\end{conj}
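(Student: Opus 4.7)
My plan is to derive the CMPP conjecture from Alspach's conjecture by a short, essentially trivial reduction that works in any abelian group and for any size of $A$, and then to carry the conclusions through the torsion-free and asymptotic machinery of Sections~\ref{torsion} and~\ref{asy}.

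The key step is the following reduction: \emph{if Alspach's conjecture holds in an abelian group $G$ for subsets of size $k-1$, then the CMPP conjecture holds in $G$ for subsets of size $k$.} To prove it, let $A\subseteq G\setminus\{0_G\}$ have size $k$, contain no $2$-subset of the form $\{x,-x\}$, and satisfy $\sum_{a\in A}a=0_G$. Pick any $a\in A$ and set $A'=A\setminus\{a\}$. Then $|A'|=k-1$, $0_G\notin A'$, and $\sum_{z\in A'}z=-a\neq 0_G$, so $A'$ is nice in the sense of Alspach. By hypothesis we get an ordering $(y_1,\ldots,y_{k-1})$ of $A'$ with pairwise distinct nonzero partial sums $s'_1,\ldots,s'_{k-1}$. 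Appending $a$ produces the ordering $(y_1,\ldots,y_{k-1},a)$ of $A$, whose partial sums are $s'_1,\ldots,s'_{k-1},0_G$; these are pairwise distinct because the $s'_i$ are distinct among themselves and all nonzero. Hence the CMPP conjecture holds for $A$. I note that the ``no $\{x,-x\}$'' hypothesis is not actually used in this argument.

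Combining the reduction with Corollary~\ref{torsionfree10} immediately gives the CMPP conjecture for subsets of size at most $12$ in every torsion-free abelian group. I would moreover redo the proof of Theorem~\ref{thm:as} with ``nice'' updated to the CMPP setting (i.e.\ $0_G\notin A$, no $\{x,-x\}\subseteq A$, $\sum_{a\in A}a=0_G$, and $\Upsilon(A)=A\cup\Delta(A)$, as suggested at the end of Section~\ref{altre}): both extra closure conditions $\sum A=0_G$ and the absence of $\{x,-x\}$ pairs pass cleanly to the quotient $H=G/{\approx}$ built in that proof, so the compactness argument transfers verbatim, yielding the asymptotic corollary that for each $k$ (in our case $k=12$) there exists $N(k)$ such that the CMPP conjecture holds in $\Z_n$ for subsets of size at most $k$ whenever every prime factor of $n$ exceeds $N(k)$.

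The main obstacle is that the plan above transfers the entire difficulty of CMPP onto Alspach's conjecture, which is itself wide open for sizes larger than $11$. Since the reduction does not use the ``no $\{x,-x\}$'' hypothesis, it cannot yield anything strictly stronger than Alspach; a proof of CMPP in full generality therefore seems to require either a proof of Alspach in full generality or a genuinely CMPP-specific argument. A natural candidate for the latter is an Alon polynomial method in the spirit of Section~\ref{altre}, with the sum-zero constraint baked in by substituting $x_k=-(x_1+\cdots+x_{k-1})$ into a polynomial such as
$$\prod_{1\leq i<j\leq k}(x_j-x_i)\cdot\prod_{1\leq i\leq j\leq k-1}(x_i+\cdots+x_j),$$
obtaining a polynomial in $k-1$ variables of degree quadratic in $k$, and then hunting for a monomial of appropriate individual degrees with nonzero coefficient modulo a prime $p$. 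This coefficient computation is the hard part: eliminating $x_k$ couples many factors that were formerly independent, so the combinatorics becomes considerably heavier than in the G-ADMS case analyzed in Section~\ref{altre}, and it is here that any complete proof would do most of the real work.
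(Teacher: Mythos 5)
Your reduction is exactly the one the paper uses: delete an element $a$ (so the remainder is a nice set with sum $-a\neq 0_G$), apply Alspach's conjecture, and append $a$ so that the final partial sum $0_G$ is distinct from the earlier nonzero ones; the paper likewise never uses the ``no $\{x,-x\}$'' hypothesis and draws the same two corollaries from Corollary~\ref{torsionfree10} and the asymptotic machinery. The proposal is correct and matches the paper's treatment of this (still open) conjecture.
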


Suppose that Conjecture \ref{Conj:als} holds for subsets of size $k$ of a given abelian group $G$.
Let $A$ be a $(k+1)$-subset of $G\setminus\{0_G\}$ such that $\sum\limits_{a\in A} a=0_G$.
Clearly, for any $a \in A$, the set $A\setminus \{a\}$ is a nice subset of $G$ of size $k$:
we can find an ordering $\omega=(a_1,a_2,\ldots,a_k)$
such that all the partial sums $s_i(\omega)$ are nonzero and pairwise distinct ($1\leq i \leq k)$.
Now, taking $\omega'=(a_1,a_2,\ldots,a_k, a)$, we obtain an ordering of the elements of $A$ such that 
$s_i(\omega')\neq s_j(\omega')$ for all $1\leq i<j\leq k+1$.
Therefore, Conjecture \ref{Conj:als} for sets of size at most $k$ in an abelian  group  implies Conjecture 
\ref{Conj:nostra} for sets of size at most $k+1$  in the same group.
It follows that:

\begin{cor}
Given a torsion-free abelian group $G$, 
$\mathrm{CMPP}$ conjecture is true for any subset $A\subset G\setminus\{0_G\}$ such 
that $|A|\leq 12$, $\sum_{z\in A} z=0_G$ and $A$ does not contain pairs of type $\{x,-x\}$.
\end{cor}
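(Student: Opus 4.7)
The plan is to combine Corollary \ref{torsionfree10}, which asserts Alspach's conjecture for subsets of size at most $11$ in any torsion-free abelian group, with the general reduction established in the paragraph immediately preceding the statement: namely, that Alspach's conjecture for subsets of size at most $k$ in an abelian group $G$ implies the CMPP conjecture for subsets of size at most $k+1$ in the same $G$.

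Concretely, given $A\subseteq G\setminus\{0_G\}$ with $|A|=m\le 12$, $\sum_{z\in A}z=0_G$, and no antipodal pair $\{x,-x\}\subset A$, I would first dispose of the small cases: $m=1$ is impossible since it would force the unique element to be $0_G$, and $m=2$ is impossible since it would force $A=\{x,-x\}$. So $3\le m\le 12$. Next, I would pick any $a\in A$ and set $A':=A\setminus\{a\}$; then $|A'|=m-1\le 11$, and $\sum_{z\in A'}z=-a\ne 0_G$ because $G$ is torsion-free and $a\ne 0_G$, so $A'$ is nice in the sense of Section \ref{torsion}. Applying Corollary \ref{torsionfree10} to $A'$ yields an ordering $\omega=(a_1,\dots,a_{m-1})$ of $A'$ with nonzero, pairwise distinct partial sums. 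Finally, appending $a$ produces the ordering $\omega'=(a_1,\dots,a_{m-1},a)$ of $A$, whose partial sums are $s_1(\omega),\dots,s_{m-1}(\omega),0_G$; these are pairwise distinct precisely because all $s_i(\omega)$ were nonzero.

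I do not foresee any real obstacle: the statement is a clean composition of Corollary \ref{torsionfree10} with the reduction already given in the paragraph above. The nontrivial content is packaged inside Corollary \ref{torsionfree10}, going back to the prime-order result of \cite{Sarah} and its extension to torsion-free groups via Theorem \ref{torsionfree}. The only point in the present argument deserving a moment of care is the use of torsion-freeness to ensure $-a\ne 0_G$, which is what makes $A'$ nice and thereby allows Alspach's conjecture to be invoked.
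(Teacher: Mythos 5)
Your argument is correct and matches the paper's: the paragraph preceding the corollary gives exactly this reduction (delete an element $a$, order the nice set $A\setminus\{a\}$ via Corollary \ref{torsionfree10}, then append $a$, whose final partial sum $0_G$ differs from all the earlier nonzero ones). One small quibble: torsion-freeness is not what makes $-a\neq 0_G$ (that follows from $a\neq 0_G$ in any group); it is needed only so that Corollary \ref{torsionfree10} applies to $G$.
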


\begin{cor}
There exists a positive integer $N$ such that $\mathrm{CMPP}$ conjecture is true for any subset $A$ of 
$\mathbb{Z}_n\setminus\{0\}$, whenever $|A|\leq 12$, $\sum_{z\in A} z=0$, $A$ does not contain pairs of type $\{x,-x\}$ 
and the prime factors of $n$ are all greater than $N$.
\end{cor}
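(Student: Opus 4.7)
The plan is to reduce this statement to the asymptotic Alspach result (Corollary \ref{cor:as}) by applying the same Alspach-to-CMPP reduction that the paper has just used in the torsion-free setting. In particular, I would set
\[
N=\max\{N(k) : 1\leq k\leq 11\},
\]
where each $N(k)$ is supplied by Corollary \ref{cor:as}. Then, for any $n$ whose prime factors are all strictly greater than $N$, Alspach's conjecture holds simultaneously in $\mathbb{Z}_n$ for every subset of size $k\in[1,11]$.

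Next I would take an arbitrary $A\subseteq\mathbb{Z}_n\setminus\{0\}$ satisfying the CMPP hypotheses with $m=|A|\leq 12$. A preliminary observation rules out the pathological small cases: $m=1$ is impossible because $0\notin A$ forces the single element to be nonzero while $\sum_{a\in A}a=0$, and $m=2$ is impossible because $\sum_{a\in A}a=0$ would force $A=\{x,-x\}$, contradicting the assumption that $A$ contains no such pair. Hence $3\leq m\leq 12$.

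Now fix any $a\in A$ and set $A'=A\setminus\{a\}$. Then $|A'|=m-1\in[2,11]$, $0\notin A'$, and $\sum_{b\in A'}b=-a\neq 0_{\mathbb{Z}_n}$, so $A'$ is nice in the Alspach sense. By the choice of $N$, Alspach's conjecture applies to $A'$, producing an ordering $\omega=(a_1,\ldots,a_{m-1})$ of $A'$ whose partial sums are nonzero and pairwise distinct in $\mathbb{Z}_n$. Forming $\omega'=(a_1,\ldots,a_{m-1},a)$, the first $m-1$ partial sums of $\omega'$ agree with those of $\omega$, while the final partial sum equals $\sum_{b\in A}b=0$. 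This final $0$ is distinct from the earlier (nonzero) partial sums, so all partial sums of $\omega'$ are distinct, which is exactly the CMPP conclusion for $A$.

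There is no genuinely hard step here: the substantive content was already packaged into Corollary \ref{cor:as} (the asymptotic Alspach statement obtained via the compactness-style argument of Section \ref{asy}) together with the elementary Alspach$\Rightarrow$CMPP implication recorded immediately before the statement. The only mild care needed is (i) taking a maximum of finitely many thresholds $N(k)$ to handle all relevant sizes uniformly, and (ii) checking that the CMPP hypothesis of containing no pair $\{x,-x\}$ eliminates the trivial small cases so that the reduction by removing one element always lands in a range where the asymptotic Alspach result applies.
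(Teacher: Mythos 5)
Your proposal is correct and follows exactly the route the paper intends: the corollary is stated as an immediate consequence of the Alspach\,$\Rightarrow$\,CMPP reduction (remove one element, order the remaining nice set, append the removed element) combined with the asymptotic Alspach result of Corollary \ref{cor:as}, taking $N$ to be the maximum of the finitely many thresholds $N(k)$ for $k\leq 11$. Your extra check that the hypotheses exclude the sizes $|A|\leq 2$ is a small but welcome clarification that the paper leaves implicit.
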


\appendix
\section{Routines for Magma}\label{app}

\begin{verbatim}
Cut:=function(pol,x,k)
 p:=0*x; C:=Coefficients(pol,x);
 for r in [1..Minimum([k+1,#C])] do  p:=p+C[r]*x^(r-1); end for;
return p;
end function;

CutAll:=function(pol,x,i,j)
 B:=Cut(pol,x[j],0);  B:=Cut(B,x[i],#x-1);
 for r in Exclude(Exclude([1..#x],i),j) do  B:=Cut(B,x[r],#x); end for;
return B;
end function;

MultiplyPol:=function(p1,p2,x,i,j,r)
 m1:=Coefficients(p1,r);  m2:=Coefficients(p2,r);
 k:=#x; s:=0*p1;
 for a in [1..Minimum(#m1,k+1)] do
  for b in [1..Minimum(#m2,k+1)] do
   if (a+b-2) le k then 
    s:=s+CutAll(m1[a]*x[r]^(a-1)*m2[b]*x[r]^(b-1),x,i,j);
   end if;
  end for; 
 end for;
return s;
end function;

PolyGk:=function(i,j,x,k)
 p:=1;
 for a in [2..k] do
  s:=0; for b in [a..k+1] do s:=s+x[b]; end for;
  p:=CutAll(p*(x[k+1]-x[a])*s,x,i,j);
 end for;
 s:=0; for b in [1..k] do s:=s+x[b]; end for;
 B:=CutAll(p*(x[k+1]-x[1])*s,x,i,j);
return B;
end function;

R<x1,x2,x3,x4,x5,x6,x7,x8,x9,x10,x11>:=PolynomialRing(Integers(),11);
x:=[x1,x2,x3,x4,x5,x6,x7,x8,x9,x10,x11];   F2:=R!(x2-x1);; 

for j in [1,2] do
 s:=0;
 for i in Exclude([1..#x],j) do
  A:=F2;
  for r in [2..#x-1] do 
   A:=MultiplyPol(A,PolyGk(i,j,x,r),x,i,j,r);  
  end for;
  s:=s+x[i]*A;
 end for;
 "Monomial e_{",#x,",",j,"}: ",s;
end for;

\end{verbatim}

With the routine \texttt{CutAll} we ignore the monomials 
$c\, x_1^{m_1}x_2^{m_2}\cdots x_k^{m_k}$ such that $m_j>0$, $m_i>k-1$ and  $m_r >k$, where  $r\neq i,j$.


\begin{thebibliography}{30}

\bibitem{Alon} N. Alon, 
Combinatorial Nullstellensatz,
\textit{Combin. Probab. Comput.} \textbf{8} (1999), 7--29. 

\bibitem{Al} B. Alspach, D.L. Kreher, A. Pastine, 
The Friedlander-Gordon-Miller conjecture is true, 
\textit{Australas. J. Combin.} \textbf{67} (2017), 11--24.

\bibitem{6} B. Alspach, G. Liversidge, 
On strongly sequenceable abelian groups, 
\textit{Art Discrete Appl. Math.}, to appear.

\bibitem{A} D.S. Archdeacon,
Heffter arrays and biembedding graphs on surfaces,
\textit{Electron. J. Combin.} \textbf{22} (2015), \#P1.74.

\bibitem{ADDY} D.S. Archdeacon, J.H. Dinitz, D.M. Donovan, E.S. Yaz\i c\i,
Square integer Heffter arrays with empty cells,
\textit{Des. Codes Cryptogr.} \textbf{77} (2015), 409--426.

\bibitem{ADMS} D.S. Archdeacon, J.H. Dinitz, A. Mattern, D.R. Stinson, 
On partial sums in cyclic groups, 
\textit{J. Combin. Math. Combin. Comput.} \textbf{98} (2016),  327--342. 

\bibitem{BH} J.-P. Bode, H. Harborth,
Directed paths of diagonals within polytopes,
\textit{Discrete Math.} \textbf{299} (2005), 3--10.
 
\bibitem{mg} W. Bosma, J. Cannon, C. Playoust, 
The Magma algebra system. I. The user language. 
\textit{J. Symbolic Comput.} \textbf{24} (1997), 235--265. 


\bibitem{CDDY} N.J. Cavenagh, J.H. Dinitz, D.M. Donovan, E.\c{S}. Yaz\i c\i,
The existence of square non-integer Heffter arrays,
\textit{Ars Math. Contemp.} \textbf{17} (2019), 369--395. 

\bibitem{CMPP}  S. Costa, F. Morini, A. Pasotti  M.A. Pellegrini,
A problem on partial sums in abelian groups,
\textit{Discrete Math.} \textbf{341} (2018), 705--712.

\bibitem{CMPPHeffter} S. Costa, F. Morini, A. Pasotti, M.A. Pellegrini,
Globally simple Heffter arrays and orthogonal cyclic cycle decompositions,
\textit{Austral. J. Combin.} \textbf{72} (2018), 549--493.

\bibitem{RelH} S.Costa, F. Morini, A. Pasotti, M.A. Pellegrini,
A generalization of Heffter arrays,
\textit{J. Combin. Des.} \textbf{28} (2020), 171--206.

\bibitem{DW} J.H. Dinitz, I.M. Wanless,
The existence of square integer Heffter arrays,
\textit{Ars Math. Contemp.} \textbf{13} (2017), 81--93. 

\bibitem{Gordon} B. Gordon, 
Sequences in groups with distinct partial products,  
\textit{Pacific J. Math.} \textbf{11} (1961), 1309--1313.
 
\bibitem{Gr} R.L. Graham, 
On sums of integers taken from a fixed sequence,
In: J.H. Jordan, W. A. Webb (eds), 
\textit{Proceedings of the Washington State University Conference on Number Theory}
(Washington State Univ., Pullman, Wash., 1971), pp. 22--40. 
Dept. Math.; Pi Mu Epsilon, Washington State University, Pullman, Wash., 1971.

\bibitem{JOS} J. Hicks,  M.A. Ollis, J.R. Schmitt,
Distinct partial sums in cyclic groups: polynomial method and constructive approaches,
\textit{J. Combin. Des.} \textbf{27} (2019), 369--385.

\bibitem{O}  M.A. Ollis, 
Sequenceable groups and related topics, 
\textit{Electron. J. Combin.} \textbf{20}  (2013), \#DS10v2. 

\bibitem{O1} M.A. Ollis,
Sequences in dihedral groups with distinct partial products, preprint
available at https://arxiv.org/abs/1904.07646.

\bibitem{Sarah} M.A. Ollis, S. Rovner-Frydman, J.R. Schmitt, private communication (2020).

\end{thebibliography}
\end{document}